\theoremstyle{plain}
\newtheorem{T}{Theorem}[section]
\newtheorem{Cor}[T]{Corollary}
\newtheorem{Prop}[T]{Proposition}
\newtheorem{PropDef}[T]{Proposition/Definition}
\newcommand\mps[1]{\marginpar{\small\sf }}
\theoremstyle{definition}\newtheorem{D}{Definition}[T]
\theoremstyle{remark}
\theoremstyle{remark}
\newcommand{\bb}[1]{\mathbb{#1}}
\newcommand\calo{\mathcal O}
\newcommand{\mc}[1]{\mathcal{#1}}
\newcommand{\Hom}{\mathrm{Hom}}
\newcommand\op[1]{\operatorname{#1}}
\newcommand\rk{\operatorname{rk}}
\theoremstyle{plain}
\theoremstyle{definition}
\newcommand{\Spec}{\mathrm{Spec\,\,}}
\newcommand{\ses}[5]{$$
\xymatrix@1{ 0 \ar[r] & {{#1}} \ar[r]^-{{#2}} & {{#3}} \ar[r]^-{{#4}} &
{{#5}} \ar[r] & 0
\\ }$$}
\newcommand{\sesdot}[5]{$$
\xymatrix@1{ 0 \ar[r] & {{#1}} \ar[r]^-{{#2}} & {{#3}} \ar[r]^-{{#4}} &
{{#5}} \ar[r] & 0. \\ }$$}
\newcommand{\sesbig}[5]{$$\xymatrix@1{{\raisebox{1.0ex}[3.0ex][1.0ex]{$0$}}
\ar@<0.6ex>[r] & {\raisebox{1.0ex}[3.0ex][1.0ex]{${#1}$}}
\ar@<0.6ex>[r]^{#2} & {\raisebox{1.0ex}[3.0ex][1.0ex]{${#3}$}}
\ar@<0.6ex>[r]^{#4} & {\raisebox{1.0ex}[3.0ex][1.0ex]{${#5}$}}
\ar@<0.6ex>[r] & {\raisebox{1.0ex}[3.0ex][1.0ex]{$0$}}}$$}
\newcommand{\Comment}[1]{}
\title{Degenerating Riemann surfaces and the Quillen metric}
\author{Dennis Eriksson}
\begin{document}
\maketitle
\abstract{The degeneration of the Quillen metric for a one-parameter family of Riemann surfaces has been studied by Bismut-Bost and Yoshikawa. In this article we propose a more geometric point of view using Deligne's Riemann-Roch theorem. We obtain an interpretation of the singular part of the metric as a discriminant and the continuous part as a degeneration of the metric on Deligne products, which gives an asymptotic development involving the monodromy eigenvalues. This generalizes the results of Bismut-Bost and is a version of Yoshikawa's results on the degeneration of the Quillen metric for general degenerations with isolated singularities in the central fiber.}

\textbf{Keywords}: Quillen metric, Deligne's Riemann-Roch, discriminants, monodromy.

\tableofcontents
\newpage

\section{Introduction} The Quillen metric is a natural metric on the determinant of the cohomology of a hermitian vector bundle on a compact K\"ahler manifold $X$. When varying the metric in smooth proper families $X \to S$, this moreover defines a smooth metric on $S$. However, whenever one approaches a singular fiber, the metric will generally be singular. The first article to treat refined invariants of this singular metric seems to be \cite{Bismut-Bost} which considered degenerations of Riemann-surfaces into singular fibers which has at worst nodal singularities. The current article arose as an attempt to understand a comment in \emph{idem}, section 2(e), concerning how it might be possible to use a theorem of Deligne, in a special case, to deduce the same result. It seems that using this theorem of Deligne, combined with theorems of T. Saito \cite{T.Saito-conductor} and D. Barlet \cite{Barlet}, allows us to give analogous results to those of Bismut and Bost, but for arbitrary singularities in the special fiber. \\ More precisely, suppose $S$ is the unit disc $D$, and suppose we have a family of Riemann surfaces $X \to D$ with singular central fiber (satisfying some additional conditions, see Theorem \ref{sing-of-metric}). If $\sigma$ is a local trivialization of the determinant of the cohomology of a (hermitian) vector bundle $E$, there is the following description of the singularity of the Quillen metric around $0$, $t \in S$:
$$\log|\sigma|_Q(t) = \rk E \frac{\Delta_f}{12}\log|t| + \varphi$$
where $|\sigma|_Q$ denotes the Quillen norm of the section $\sigma$, $\Delta_f$ is a discriminant number and $\varphi$ is a continuous function. If moreover $X$ is smooth and $f^{-1}(0) = X_0$ has only isolated singularities we obtain an asymptotic expansion (see Theorem \ref{thm:asymptotic}):
 $$\log|\sigma|_Q(t) \sim \rk E \frac{\mu_f}{12}\log|t| + \varphi_0 + \sum_r T_{m,m'}^{r,h} t^m \overline{t}^{m'} |t|^r (\log |t|)^h,$$ where $\mu_f$ is the total Milnor number of $X_0$, $\varphi$ a smooth function, $r$ are local monodromy eigenvalues of the singularities, and $T_{m,m'}^{r,h}$ are certain constants depending on the metric on $E$ further indexed by $m,m', h \in \bb N$. \\
These results seem to be related to results of Yoshikawa in \cite{Yoshikawa-Smoothinghypersurfacesing}, where he considers the metric induced from the tangent bundle on $X$ when $X$ is smooth, to give formulas for the curvature of the Quillen metric, also in higher relative dimension than one. These theorems are surely related, but the necessary comparison has not been made. Unlike Yoshikawa's result, however, \ref{sing-of-metric} does not seem to obtain contributions from non-reduced components in the special fiber. \\
The article is organized as follows. After reviewing some known results we state the main theorem. In the following section we study the degeneration of the Deligne-isomorphism, and relate it to discriminants in the sense of projective duality (cf. \cite{GZK}, chapter 1). We then apply the above mentioned result of T. Saito (and a slight refinement from \cite{discriminant-conductor}) to control the singular part of the metric. Finally we apply the result of Barlet to control the continuous part of the metric which is controlled by Deligne products. The general approach is geometric, and shows which terms correspond to which part of the metric. It seems however difficult to obtain similar geometric results in higher relative dimension. \\

$\mathbf{Acknowledgements}$: I wish to thank Gerard Freixas  for many interesting comments on various topics of this article and Bo Berndtsson for valuable discussions. I'm also very grateful for the many explainations Jan Stevens shared with me on monodromy and curve singularities. Finally I'm happy to acknowledge the encouragement of Vincent Maillot to consider the case of degenerations when the total space is not smooth.
\section{Definitions and Recollections}
In this section we will review the definition and properties of the relevant objects for this article. Many of the definitions have much more general contexts, but for the purpose of this article we restrict ourselves to the situation considered here and to simplify the exposition we suppose unless stated that $X$ is smooth. Good references for this theory is \cite{determinant} and \cite{Soule-bourbaki}. All vector bundles considered will also be holomorphic vector bundles.\\
Suppose $X$ is a connected smooth complex surface, and let $f: X \to D$, with the unit disc $D = \{z \in \bb C, |z| < 1\}$, be a proper holomorphic map which is a submersion outside of $X_0 = f^{-1}(0)$. To relax language, we say that $X \to D$ is a relative curve. \\
Define the relative canonical bundle as $\omega = \det \Omega_X \otimes \det T_D$, where $\Omega_X$ (resp. $T_D$) denotes the holomorphic cotangent bundle of $X$ (resp. the holomorphic tangent bundle on $D$). It can more generally be defined when $f$ is a local complete intersection. Given a vector bundle $E$ on $X$, we define a line bundle on $D$ whose fibers over every point $t \in D$ is given by $\det H^0(X_t, E_t) \otimes \det H^1(X_t, E_t)^{-1}$ for sheaf cohomology groups $H^i(X_t, E_t)$. It is denoted by $\lambda(E)$ and is called the determinant of the cohomology. A more global definition is provided by applying the Knudsen-Mumford-determinant (cf. \cite{Knudsen-I}) to the direct image sheaves $f_* E$ and $R^1f_* E$ which exhibits this line bundle as a holomorphic line bundle on the base. Whenever $E$ and $\omega$ have $C^\infty$ hermitian metrics (the notation $(E,h)$ or $\overline {E}$ will be used to denote a hermitian metric on a vector bundle $E$), the Quillen metric is defined on $\lambda(E)_t$ where $t \neq 0$. It is defined as follows: First of all, for every fiber $X_t$ the groups $H^i(X_t, E_t)$ can, by Hodge theory, be represented by harmonic forms and the induced metric on $\lambda(E)_t$ is the $L^2$-metric. Let $$\zeta_\Delta(s) = \sum_{\lambda > 0} \frac{1}{\lambda^s}$$
where the sum is over all positive eigenvalues $\lambda$ of the Kodaira-Laplace operator $\Delta_{\overline \partial}$ acting on $E_t$-valued smooth functions. Then the Quillen metric is:
$$h_Q = h_{L^2} \exp(\zeta'_\Delta(0)).$$
The Quillen metric is a smooth metric on $\lambda(E)$ outside of $0$, and the purpose of this article is to study the behavior of this metric close to 0. \\
We recall also the main result of Deligne we will need, which also explains the general strategy of this article. Given two line bundles $L$ and $M$ on $X$ for a relative curve $X$ over a complex manifold $S$, we recall that there is a natural line bundle on $S$, the Deligne-product. It can be defined as $$\langle L, M \rangle := \lambda((L-1)\otimes (M-1))$$
where $\lambda = \det Rf_* $ and $\lambda(A \pm B) := \lambda(A) \otimes \lambda(B)^{\pm 1}$. Moreover, given two smooth hermitian metrics on $L$ and $M$, the line bundle $\langle L, M \rangle$ carries a natural metric \cite{determinant}, \cite{Elkik-metric}. In this article we will only need two properties of this metric (for which the previous two articles are good references): \\
\begin{enumerate}
 \item  If $X \to D$ is a holomorphic submersion, there is the following curvature formula: $$c_1(\langle L , M \rangle) = \int_{X/D} c_1(\overline L) \wedge c_1(\overline M).$$
 \item  In the same situation, if $\overline L \simeq \overline L'$ is an isomorphism of line bundles, then the norm of the induced isomorphism $$\langle \overline L, \overline M \rangle \simeq \langle \overline L', \overline M \rangle $$ is given by $a(t) = \exp \int_{X_t} \log (h/h') c_1(\overline M)$, for $h$ and $h'$ denoting the respective metrics on $\overline L$ and $\overline L'$.
\end{enumerate}
We will also introduce another line bundle, $IC_2(E)$ on $D$. In a similar spirit, it is defined as $$IC_2(E): = \lambda(E - \rk E - (\det E - 1))$$
which should be interpreted in the same way as in the previous definition. Denote by $\overline E$ a hermitian metric on a vector bundle $E$. The following properties characterize the line bundle $IC_2(\overline E)$ and the metric on it:
\begin{enumerate}
  \item If $L$ is a line bundle on $X$, then $IC_2(L)$ is the trivial line bundle with trivial metric.
  \item If $X \to D$ is a holomorphic submersion, there is the following curvature formula: $$c_1(IC_2(\overline E)) = \int_{X/D} c_2(\overline E).$$
  \item \label{whitney}  [Whitney formula] Given a short exact sequence of vector bundles $$\mc E: 0 \to E' \to E \to E'' \to 0,$$ there is an isomorphism
  $$IC_2(E) \simeq IC_2(E') IC_2(E'') \langle \det E', \det E'' \rangle$$
  and if the various vector bundles carry hermitian metrics, this has norm $\exp(a)$ with $$a(t) = \frac{1}{2\pi i}\int_{X_t} \op{Tr}(\alpha^* \wedge \alpha)$$ and $\alpha \in \Omega^{0,1} \Hom(E'', E')$ is the smooth vector valued $(0,1)$-form determining the extension $\mc E$ with metrics.
\end{enumerate}
Whenever $X \to S$ is not a submersion but at least projective, the metrics in question are continuous by the arguments of \cite{Moriwaki-Deligne}, and the above formulas for the curvature have to be interpreted in the sense of currents. We will see in section \ref{monodromy} how to refine this result. The importance of these line bundles come from the following version of the Riemann-Roch theorem by Deligne.
\begin{T} [\cite{determinant}, Th\'eor\`eme 11.4, \cite{Soule-bourbaki}, 4.4]\label{thm:riemann-roch} Let $X \to S$ be a holomorphic submersion whose fibers are Riemann surfaces, and let $\overline{E}$ (resp. $\overline{\omega}$) be a hermitian vector bundle (resp. a hermitian metric on $\omega$) on $X$. Then there is a canonical isomorphism, up to sign,
$$\lambda(E)^{12} \simeq \langle \omega, \omega \rangle^{\rk E} \langle \det E, \det E \otimes \omega^{-1}\rangle^6 IC_2(E)^{-12}.$$
Moreover, when the left side is equipped with the Quillen metric, and the right side is equipped with the metrics above, the logarithm of the norm of this isomorphism is
$$\rk E \cdot (2-2g) \cdot \left(\frac{\zeta'(-1)}{\zeta(-1)} + \frac{1}{2}\right).$$
Here $\zeta(s)$ denotes the Riemann zeta-function.
\end{T}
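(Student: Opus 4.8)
\medskip
\noindent\textbf{Plan of proof.}
The plan is to establish the algebraic isomorphism and the metric identity in parallel, reducing in both cases by d\'evissage first to a line bundle and then to $\calo_X$.

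\emph{The isomorphism.} I would begin by checking that both sides are ``exact'' functors of $E$ with matching correction terms, so that the splitting principle applies. For a short exact sequence $0\to E'\to E\to E''\to 0$: the determinant of the cohomology $\lambda$ is additive; $\langle\omega,\omega\rangle^{\rk E}$ is additive since rank is; the Deligne product is biadditive and symmetric, so $\langle\det E,\det E\otimes\omega^{-1}\rangle^{6}$ gains the factor $\langle\det E',\det E''\rangle^{\otimes12}$, while by the Whitney formula (\ref{whitney}) the factor $IC_2(E)^{-12}$ gains $\langle\det E',\det E''\rangle^{\otimes-12}$, and these cancel. Thus the two sides transform identically, and since every vector bundle on a curve is a successive extension of line bundles --- which one arranges after a base change on $S$ and descends, all the isomorphisms being canonical (this step needs some care over a general base) --- it suffices to treat $E=L$ a line bundle. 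Then $IC_2(L)$ is trivial and one must produce $\lambda(L)^{12}\cong\langle\omega,\omega\rangle\otimes\langle L,L\otimes\omega^{-1}\rangle^{6}$. I would deduce this from the Deligne--Mumford isomorphism $\lambda(L)^{\otimes2}\cong\langle L,L\otimes\omega^{-1}\rangle\otimes\lambda(\calo_X)^{\otimes2}$ together with $\lambda(\calo_X)^{\otimes12}\cong\langle\omega,\omega\rangle$; the former is itself obtained by writing $L=\calo_X(D_1)\otimes\calo_X(D_2)^{-1}$, reducing to one effective relative Cartier divisor via $0\to\calo_X\to\calo_X(D)\to\calo_X(D)|_D\to0$ and adjunction, and using the description of the Deligne product as a norm along $D$. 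The ambiguity of signs in determinants of cohomology accounts for the ``up to sign''.

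\emph{Reducing the metric identity.} The key observation is that the norm $N$ of this isomorphism is, fibrewise over $S$, a positive number whose logarithm is locally constant: the curvature of the Quillen metric on $\lambda(E)$ is $\int_{X/S}\operatorname{Td}(\overline{\omega}^{-1})\operatorname{ch}(\overline{E})$ in degree $2$ by the curvature theorem of Bismut--Gillet--Soul\'e, the curvatures of the three factors on the right are the ones stated for the Deligne and $IC_2$ metrics, and Grothendieck--Riemann--Roch for differential forms makes them agree, so $\partial\overline{\partial}\log N=0$. Hence it suffices to compute $N$ for one compact Riemann surface $X$ of genus $g$. The d\'evissage above runs metrically as well: the secondary (Bott--Chern) corrections in the additivity of the Quillen metric match the metric terms in the Whitney formula (\ref{whitney}) and in the Deligne--Mumford isomorphism, and the latter is an isometry (its norm is locally constant as above, it is the identity when $L=\calo_X$ since the Deligne pairing with $\calo_X$ is trivial with trivial metric, and it is independent of $\deg L$). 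Thus everything reduces to showing that $\lambda(\calo_X)^{\otimes12}\cong\langle\omega,\omega\rangle$ has logarithmic norm $(2-2g)\big(\zeta'(-1)/\zeta(-1)+\tfrac12\big)$.

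\emph{The constant, and the main obstacle.} I expect this last normalization to be the only genuine computation. One route is to evaluate both metrics explicitly in two cases and interpolate: for $X=\mathbf P^1$ with a round metric the regularized determinant of the Kodaira--Laplace operator on functions involves $\zeta'(-1)$, which yields the genus-$0$ value $2\big(\zeta'(-1)/\zeta(-1)+\tfrac12\big)$; for elliptic curves the analytic torsion is governed by the Dedekind $\eta$-function via Kronecker's limit formula and the isomorphism turns out to be an isometry, so the constant vanishes at $g=1$. That the constant is affine-linear in $g$ --- equivalently proportional to $2-2g=2\chi(\calo_X)$ --- I would extract from the behaviour of both sides under degeneration of $X$ to a nodal curve, i.e.\ the additivity of the analytic torsion up to an explicit nodal term, precisely the circle of ideas of this article; these inputs then fix the constant for all $g$. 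Alternatively one may invoke the arithmetic Riemann--Roch theorem of Gillet--Soul\'e together with Bismut--Gillet--Soul\'e and Bismut--Lebeau, in which $\zeta'(-1)/\zeta(-1)+\tfrac12$ arises as the value on $\calo_X$ of the relevant component of the arithmetic Todd class. Pinning down this constant exactly --- including its linear dependence on $g$ --- is the crux; the remainder is careful bookkeeping of canonical isomorphisms and their norms.
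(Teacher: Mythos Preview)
The paper does not prove this theorem: it is quoted verbatim from Deligne \cite{determinant}, Th\'eor\`eme~11.4 and Soul\'e \cite{Soule-bourbaki}, 4.4, and used as a black box. There is therefore nothing in the paper to compare your proposal against.

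That said, your sketch follows the architecture of Deligne's original argument quite faithfully: the d\'evissage via the Whitney formula to reduce to line bundles, the reduction from a line bundle to $\calo_X$ via the isomorphism $\lambda(L)^{\otimes 2}\cong\langle L,L\otimes\omega^{-1}\rangle\otimes\lambda(\calo_X)^{\otimes 2}$, and the observation that the norm of the isomorphism has vanishing curvature and is therefore constant on each connected component of the base. Your identification of the final normalization constant as the genuine analytic input is also correct. One caution: the argument you propose for the affine-linearity of the constant in $g$ --- via degeneration to a nodal curve and additivity of analytic torsion --- is circular in the context of the present paper, since the degeneration behaviour of the Quillen metric is precisely what the paper is trying to establish using this theorem as input. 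In the cited references the linearity in $\chi(\calo_X)$ is instead obtained from the structure of the arithmetic Riemann--Roch formula itself (the constant arises from the $R$-genus term in the arithmetic Todd class, which is visibly proportional to $c_1$), or by a direct computation on a family whose base has varying genus.
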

Partially following \cite{T.Saito-conductor} and \cite{Deligne-Quillen}, the induced rational section $$\Hom(\langle \omega, \omega \rangle^{\rk E} \langle \det E, \det E \otimes \omega^{-1}\rangle^6 IC_2(E)^{-12}, \lambda(E)^{12})$$ in generically smooth families of Riemann surfaces is called the discriminant section (see Proposition \ref{Prop:discriminant} for an explanation of this).

\begin{D}\label{def:smoothmetric}Whenever $X$ is not smooth, we consider a desingularization $\pi: X' \to X$, i.e. a proper holomorphic map which is an isomorphism over the smooth locus. We define a $C^\infty$ metric on a vector bundle $E$ on $X$ a hermitian $C^\infty$ metric on $\pi^* E$ for some desingularization $\pi: X' \to X$. Two metrics associated to desingularizations $X' \to X$ and $X'' \to X$ are said to be equivalent if there is a third desingularization $X'''\to X$ which dominates the two other desingularizations such that the metrics coincide when pulled back to $X'''$. \end{D}
\begin{Prop} \label{prop:deligne-metric} Let $\pi: X' \to X$ be a resolution of singularities and $\overline{L}$ and $\overline{M}$ line bundles with $C^\infty$ metrics. Then
the $C^\infty$ metric on $\langle \overline{L}, \overline{M} \rangle$ on $D^*$ has a continuous extension to 0. Likewise for a vector bundle $\overline{E}$ with a $C^\infty$ hermitian metric, the metric on $IC_2(\overline{E})$ has a continuous extension to 0. This is independent of the equivalence class. 
\end{Prop}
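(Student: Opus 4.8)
The plan is to reduce the whole statement to the continuity result of Moriwaki \cite{Moriwaki-Deligne} recalled above, applied to the smooth surface $X'$, the only extra input being that a resolution is an isomorphism over the smooth locus. First I would note that, since $f$ is a submersion away from $X_0$, the surface $X$ is smooth over $D^*:=D\setminus\{0\}$, so $\pi$ restricts to an isomorphism $f'^{-1}(D^*)\xrightarrow{\sim}f^{-1}(D^*)$, where $f':=f\circ\pi$. Under this identification the smooth family $X'_{D^*}\to D^*$ becomes $X_{D^*}\to D^*$ and $(\pi^*\overline L,\pi^*\overline M,\pi^*\overline E)$ becomes $(\overline L,\overline M,\overline E)$. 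Since for a smooth family the Deligne pairing metric and the $IC_2$ metric depend only on the family together with the metrics on the relevant bundles, the $C^\infty$ metric on $\langle\overline L,\overline M\rangle$ over $D^*$ is exactly the restriction to $D^*$ of the metric on $\langle\pi^*\overline L,\pi^*\overline M\rangle$ formed for $X'/D$, and likewise $IC_2(\overline E)$ over $D^*$ is $IC_2(\pi^*\overline E)$ restricted to $D^*$; in particular these metrics are intrinsic to $X$ over $D^*$.

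Next, choosing $\pi$ so that $f'\colon X'\to D$ is projective (possible after a further blow-up), $f'$ is a projective morphism from a smooth surface which is a submersion outside $f'^{-1}(0)$. By Moriwaki's theorem the metric on $\langle\pi^*\overline L,\pi^*\overline M\rangle$ and the metric on $IC_2(\pi^*\overline E)$, formed for this non-submersive family, are continuous on all of $D$. Combined with the first paragraph, this is the sought continuous extension to $0$. If one wants to match the underlying Knudsen--Mumford line bundles on all of $D$, one uses the projection formula $R\pi_*\pi^*(-)\simeq(-)\otimes^{\mathbf L}R\pi_*\calo_{X'}$ to identify $\langle L,M\rangle_{X/D}$ with $\langle\pi^*L,\pi^*M\rangle_{X'/D}$ up to a line bundle on $D$ built from $R^1\pi_*\calo_{X'}$, which is holomorphically trivial because $D$ is a disc; the same applies to $IC_2$. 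Otherwise one simply carries out all constructions on $X'$ from the start.

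For independence of the equivalence class, suppose two $C^\infty$ metrics, associated to resolutions $X'\to X$ and $X''\to X$, become equal after pulling back to a common dominating resolution $X'''\to X$. Then they restrict to the same $C^\infty$ metric on $L$, $M$, $E$ over the smooth locus of $X$, hence over $f^{-1}(D^*)$. By the first paragraph the two resulting Deligne pairing (resp. $IC_2$) metrics then coincide on $D^*$, and since each has a continuous extension to $D$ while $D^*$ is dense, the two extensions coincide.

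The only genuinely non-formal ingredient is Moriwaki's continuity theorem on the smooth model $X'$; everything else is the remark that $\pi$ is an isomorphism over $f^{-1}(D^*)$ together with density of $D^*$ in $D$, so I do not anticipate a real obstacle. The point to be handled with some care is the comparison of $\langle L,M\rangle_{X/D}$ with $\langle\pi^*L,\pi^*M\rangle_{X'/D}$ as holomorphic line bundles on $D$ (and the analogue for $IC_2$); this is exactly why it is cleanest to transport all constructions to the smooth model $X'$ and to read the metric statement on $D^*$.
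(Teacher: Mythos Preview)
Your argument for the Deligne pairing is essentially the paper's: pass to a resolution $X'\to X$, identify $\langle L,M\rangle$ over $D^*$ with $\langle\pi^*L,\pi^*M\rangle$, and then invoke Moriwaki's continuity on the smooth projective model. The paper cites \cite{discriminant-conductor}, Theorem 1.4 for the identification $\langle L,M\rangle\simeq\langle\pi^*L,\pi^*M\rangle$, whereas you sketch it via the projection formula; either is fine, and your remark that density of $D^*$ forces uniqueness of the extension (hence independence of the resolution) matches the paper verbatim.

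The gap is in the $IC_2$ case. Moriwaki's theorem in \cite{Moriwaki-Deligne} is a statement about the Deligne pairing of \emph{line bundles}; it does not directly give you continuity of the metric on $IC_2(\pi^*\overline E)$ for a vector bundle $\overline E$. You cannot simply assert ``by Moriwaki's theorem the metric on $IC_2(\pi^*\overline E)$ is continuous on all of $D$'' without further work. The paper closes this gap by (i) filtering $E$ (on the projective model) by subbundles with line bundle quotients as in \cite{Bismut-Bost}, Proposition~3.5, and (ii) applying the Whitney isomorphism for $IC_2$ repeatedly, which expresses $IC_2(E)$ in terms of Deligne pairings of line bundles together with correction factors of the form $\exp\bigl(\tfrac{1}{2\pi i}\int_{X_t}\op{Tr}(\alpha^*\wedge\alpha)\bigr)$. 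The Deligne pairings are continuous by Moriwaki, and the remaining fiber integrals are continuous by \cite{Barlet-continuity}. You should either carry out this reduction, or explain why Moriwaki's argument adapts to $IC_2$; as written, the $IC_2$ part of your proposal is incomplete.
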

\begin{proof} The proof of \cite{discriminant-conductor}, Theorem 1.4 shows that $$\langle L, M \rangle \simeq \langle \pi^* L, \pi^* M \rangle.$$ The arguments of the main result of \cite{Moriwaki-Deligne} then shows that the metric on the latter bundle is continuous, since the family is projective by Proposition \ref{Prop:algebraic}. Since the metric is already smooth outside of 0, the continuous extension is necessarily unique and independet of the desingularization. For the second point, the argument of \cite{Bismut-Bost} allows us to filter $E$ by vector bundles with line bundle quotients. Then the Whitney formula for $IC_2$ (see property (\ref{whitney}) of $IC_2$) shows that
$IC_2(E) \simeq IC_2(\pi^* E)$. Using the result for line bundles, and \cite{Barlet-continuity} for continuity of the fiber integrals involved, we see that these metrics are also continuous.
\end{proof}

We also recall the following statement:
\begin{PropDef} [\cite{Steenbrink} 2.26, also see \cite{laufer}, \cite{wahl}, 3.15.1] Suppose $X$ normal local complete intersection surface, admitting a smoothening $p: Y \to D$ with $p^{-1}(0) = X$. Then the Milnor number of a point $x \in X$ is independent of the smoothening and is given by
$$\mu_{X,x} = 12p_g + \Gamma^2 - b_1(E) + b_2(E).$$
These numbers are defined as follows. Let $\pi: X' \to X$ be a resolution of singularities of $x$, with exceptional set $E$. Then $p_g = \dim_\bb C R^1 \pi_* \calo_{X'}$, $\Gamma$ is the discrepancy
$$\Gamma = \omega_{X'/D} - \pi^* \omega_{X/S},$$ and $b_i(E)$ are the Betti numbers of $E$.
\end{PropDef}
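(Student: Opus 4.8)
This is classical; I indicate the route, which is essentially that of \cite{laufer} (see also \cite{Steenbrink}, \cite{wahl}). First observe that the right-hand side depends only on the germ $(X,x)$: the geometric genus $p_g$, the exceptional set $E$ and the discrepancy divisor $\Gamma = \omega_{X'/D}-\pi^{\ast}\omega_{X/D} = \omega_{X'}-\pi^{\ast}\omega_{X}$ are read off from a resolution of $(X,x)$. Likewise $\mu_{X,x}$, the Milnor number of an isolated complete intersection surface singularity, is intrinsic: it is the rank of $\widetilde H^{2}$ of the Milnor fibre $F$, which is the general fibre of the miniversal deformation and does not depend on the smoothening. Hence the independence statement is automatic, and only the formula must be proved. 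By Hamm's theorem $F$ is simply connected with $\widetilde H^{\bullet}(F,\mathbb Z)$ free, concentrated in degree $2$ and of rank $\mu_{X,x}$; therefore $\chi_{\mathrm{top}}(F)=1+\mu_{X,x}$, and summing over the finitely many singular points, $\chi_{\mathrm{top}}(X_s) = \chi_{\mathrm{top}}(X) + \sum_{y\in\operatorname{Sing}(X)}\mu_{X,y}$ for a smooth nearby fibre $X_s$ of $p$.

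The plan is to compute $\chi(\calo_X)$ in two ways and compare. On the one hand, $p$ is flat with Gorenstein fibres over the regular base $D$, so $Y$ is Gorenstein and $\omega_{Y/D}$ is a line bundle restricting fibrewise to $\omega_{X_t}$; consequently $\chi(\calo_{X_s}) = \chi(\calo_{X})$ and $K_{X_s}^{2} = K_{X}^{2}$ (the latter being the self-intersection of the line bundle $\omega_X$ on the Gorenstein surface $X$), since holomorphic Euler characteristics and intersection numbers are locally constant in a flat proper family. Noether's formula $12\,\chi(\calo_{X_s}) = K_{X_s}^{2}+\chi_{\mathrm{top}}(X_s)$ on the smooth surface $X_s$ then gives
$$12\,\chi(\calo_{X}) = K_{X}^{2} + \chi_{\mathrm{top}}(X) + \sum_{y\in\operatorname{Sing}(X)}\mu_{X,y}.$$
On the other hand, let $\pi\colon X'\to X$ be a resolution, $E_y = \pi^{-1}(y)$, and $\Gamma = \sum_y\Gamma_y$ with $\Gamma_y$ supported on $E_y$. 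Then $\pi_{\ast}\calo_{X'} = \calo_X$ and $R^{1}\pi_{\ast}\calo_{X'}\cong\bigoplus_y\mathbb C^{\,p_g(X,y)}$, so the Leray spectral sequence gives $\chi(\calo_{X'}) = \chi(\calo_X) - \sum_y p_g(X,y)$; the projection formula gives $K_{X'}^{2} = K_{X}^{2}+\Gamma^{2}$ (since $\pi_{\ast}\Gamma=0$); and $\chi_{\mathrm{top}}(X') = \chi_{\mathrm{top}}(X) - \#\operatorname{Sing}(X) + \sum_y\chi_{\mathrm{top}}(E_y)$. Noether's formula on $X'$ and subtraction of the previous identity yield
$$0 = \sum_{y\in\operatorname{Sing}(X)}\bigl(12\,p_g(X,y) + \Gamma_y^{2} + \chi_{\mathrm{top}}(E_y) - 1 - \mu_{X,y}\bigr).$$
Each summand is a local analytic invariant of $(X,y)$, and $\chi_{\mathrm{top}}(E_y)-1 = -b_1(E_y)+b_2(E_y)$ since $E_y$ is connected; a routine localization (one may assume $X$ has a single singular point, or argue entirely locally as in \cite{laufer}) then gives the asserted pointwise formula.

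The step I expect to be the real obstacle is the appearance of the term $12\,p_g$: this coefficient is not of topological nature, and extracting it is exactly what forces the use of Noether's formula on the two smooth surfaces $X_s$ and $X'$ together with the invariance of $\chi(\calo)$ and $K^{2}$ under, respectively, the degeneration to $X$ and the resolution of $X$. Once that analytic input is in place, the remaining terms $\Gamma^{2}$ and $b_1(E)-b_2(E)$ are forced by the bookkeeping of discrepancies and of the topology of the exceptional divisor.
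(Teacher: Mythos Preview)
The paper does not provide its own proof of this statement: it is stated as a \texttt{PropDef} with citations to \cite{Steenbrink}, \cite{laufer}, and \cite{wahl}, and the subsequent text merely uses the formula as a definition in the more general (non-lci) case. Your sketch is correct and is precisely Laufer's argument from \cite{laufer}: deform to a smooth fibre to get one Noether identity, resolve to get another, and subtract; so there is nothing to compare beyond noting that you have reproduced the cited proof.
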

In the general case \cite{Steenbrink} remarks that the above number is independent of choice of resolution of singularities, and we use the same formula to define, for the purposes of this article, the Milnor number $\mu_{X,x}$ and put $\mu_X = \sum_{x \in X} \mu_{X,x}$. \\ We are now ready to state the main result. We consider as before $f: X \to D$ a proper holomorphic map which is a submersion outside of $t \neq 0$ with Riemann surface fibers, $X$ connected. We suppose that we are given a vector bundle $E$ on $X$, and that $E$ and $\omega$ (see above for the definition) are equipped with $C^\infty$ hermitian metrics. 
\begin{T} \label{sing-of-metric} Suppose that $X$ is normal, and that $X \to D$ is a projective local complete intersection. Let $\sigma$ be a local section of $\lambda(E)$ on $D$ such that $\sigma(0) \neq 0$. Then
$$\log|\sigma|_Q = \rk E \frac{\Delta_f}{12}\log|t| + \varphi$$
where $\varphi$ is continuous, and $$\Delta_f = \mu_X + \chi(X_0) - \chi(X_t),$$
with $\chi$ denoting topological Euler characteristic and $t \in D \setminus \{0\}.$
\end{T}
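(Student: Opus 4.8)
The plan is to transport the statement through Deligne's Riemann--Roch isomorphism. Over the punctured disc $D^{*}=D\setminus\{0\}$ the morphism $f$ is a submersion, so Theorem~\ref{thm:riemann-roch} provides a canonical isomorphism, up to sign,
$$\rho\colon\lambda(E)^{12}\;\xrightarrow{\sim}\;\langle\omega,\omega\rangle^{\rk E}\,\langle\det E,\det E\otimes\omega^{-1}\rangle^{6}\,IC_2(E)^{-12},$$
whose logarithmic norm, for the Quillen metric on the source and the Deligne and $IC_2$ metrics on the target, is the constant $c:=\rk E\,(2-2g)\bigl(\zeta'(-1)/\zeta(-1)+\tfrac12\bigr)$, with $g$ the genus of the generic fiber. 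All four metrized line bundles are defined over the whole disc, so $\rho$ extends to a rational isomorphism over $D$; denote by $d$ the order at $0$ of the resulting discriminant section. Since $\sigma(0)\neq0$, the section $\sigma^{12}$ is a local frame of $\lambda(E)^{12}$ at $0$, and comparing $\rho(\sigma^{12})$ with $t^{d}$ times a local frame of the target bundle gives
$$12\log|\sigma|_Q\;=\;d\,\log|t|\;+\;\psi,$$
where $\psi$ differs by an additive constant from the logarithmic norm of a local frame of the target bundle. The theorem thus reduces to two points: (i) $\psi$ is continuous near $0$; (ii) $d=\rk E\,\bigl(\mu_X+\chi(X_0)-\chi(X_t)\bigr)$.

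Point (i) is immediate from Proposition~\ref{prop:deligne-metric}. Since $X$ is a normal surface it admits a resolution $\pi\colon X'\to X$; by the proof of that proposition $\langle L,M\rangle\simeq\langle\pi^{*}L,\pi^{*}M\rangle$ and $IC_2(E)\simeq IC_2(\pi^{*}E)$, the morphism $X'\to D$ is projective by Proposition~\ref{Prop:algebraic}, and the Deligne and $IC_2$ metrics on the pulled-back bundles are continuous over $0$ by \cite{Moriwaki-Deligne} together with \cite{Barlet-continuity} for the relevant fiber integrals. Hence $\psi$ is continuous near $0$, and once (ii) is known, $\varphi:=\log|\sigma|_Q-\tfrac{\rk E}{12}\Delta_f\log|t|=\tfrac1{12}\psi$ is continuous near $0$ while being $C^{\infty}$ on $D^{*}$.

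For (ii) one first disposes of the vector bundle. Filtering $E$ by subbundles with line bundle quotients $L_1,\dots,L_{\rk E}$, as in \cite{Bismut-Bost}, and using the multiplicativity of $\lambda$ and of $IC_2$ and the bilinearity of $\langle\,,\rangle$ in short exact sequences — all of which are isomorphisms over the whole of $D$ and contribute nothing to the discriminant — one expresses $d$ as the sum over $i$ of the orders of the rank-one discriminant sections of $\lambda(L_i)^{12}\simeq\langle\omega,\omega\rangle\,\langle L_i,L_i\otimes\omega^{-1}\rangle^{6}$. This order is independent of the chosen line bundle, being intrinsic to $f$ (cf. \cite{T.Saito-conductor}, \cite{determinant}), so $d=\rk E\cdot d_0$ with $d_0$ the order of the discriminant section of $\lambda(\mathcal O_X)^{12}\simeq\langle\omega,\omega\rangle$; it remains to prove $d_0=\mu_X+\chi(X_0)-\chi(X_t)$.

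When $X$ is regular this is T.~Saito's conductor--discriminant formula \cite{T.Saito-conductor}: $d_0$ equals the Artin conductor $\mathrm{Art}(X/D)$, which over $\bb C$ (no Swan part) is $\chi(X_0)-\chi(X_t)$, while $\mu_X=0$. In general one pulls back along $\pi$: from $\langle\omega_{X/D},\omega_{X/D}\rangle\simeq\langle\pi^{*}\omega_{X/D},\pi^{*}\omega_{X/D}\rangle$, the relation $\pi^{*}\omega_{X/D}=\omega_{X'/D}(-\Gamma)$ with $\Gamma$ the discrepancy supported over $0$, and the decomposition $R\pi_{*}\mathcal O_{X'}=\mathcal O_X\oplus R^{1}\pi_{*}\mathcal O_{X'}[-1]$ (valid since $X$ is normal), one compares $d_0$ with the corresponding order for $X'\to D$. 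The discrepancy is a sum of local terms at the singular points of $X$, built from the intersection number $\Gamma^{2}$, the lengths $p_g(x)=\dim_{\bb C}(R^{1}\pi_{*}\mathcal O_{X'})_x$, and the Betti numbers of the exceptional curves. Applying T.~Saito on the smooth $X'$ (so that order equals $\chi(\pi^{-1}(X_0))-\chi(X_t)$), using the refinement of \cite{discriminant-conductor}, and computing $\chi(\pi^{-1}(X_0))-\chi(X_0)$ from the exceptional curves, these local terms recombine — precisely through the Milnor number formula recalled before Theorem~\ref{sing-of-metric} — into $\sum_x\mu_{X,x}=\mu_X$, giving $d_0=\mu_X+\chi(X_0)-\chi(X_t)$. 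Carrying out this local reconciliation of the discriminants on $X$ and on $X'$, keeping precise track of the discrepancy, of the intersection numbers, and of the change of Euler characteristic across the resolution, is the main obstacle; the remaining steps are formal consequences of Deligne's theorem and of Proposition~\ref{prop:deligne-metric}.
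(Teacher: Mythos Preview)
Your proposal is correct and follows essentially the same route as the paper: transport through Deligne's isomorphism (Theorem~\ref{thm:riemann-roch}), continuity of the Deligne and $IC_2$ metrics at $0$ via Proposition~\ref{prop:deligne-metric}, and identification of the order $d$ of the discriminant section with $\rk E\cdot\Delta_f$. The only difference is one of packaging: the paper isolates the last step as Proposition~\ref{Prop-saito:discriminant} and simply imports the formula $d_0=\mu_X+\chi(X_0)-\chi(X_t)$ from \cite{T.Saito-conductor} and \cite{discriminant-conductor} (the reduction from general $E$ to a line bundle being done there, as you also do, by Serre duality and filtration), whereas you sketch how to recover it by comparing the discriminants of $X$ and of a resolution $X'$ through the discrepancy $\Gamma$, $p_g$, and the Laufer--Steenbrink expression for $\mu_{X,x}$; this sketch is along the right lines and is precisely the content of the cited reference, so what you flag as the ``main obstacle'' is in the paper delegated entirely to \cite{discriminant-conductor}, Theorem~1.4.
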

In the case when $X$ is smooth we also obtain the following version of \cite{Yoshikawa-Smoothinghypersurfacesing}:
\begin{T} \label{thm:asymptotic} Suppose furthermore that $X$ is smooth (so that $f$ is automatically a projective local complete intersection, see Proposition \ref{Prop:algebraic} below), and that $X_0$ has only isolated singularities. Then we have an asymptotic expansion at 0:
$$\log|\sigma|_Q \sim \rk E \frac{\mu_f}{12}\log|t| + \varphi_0 + \sum T_{m,m'}^{r,h} t^m {\overline t}^{m'}|t|^r (\log|t|)^h.$$
Here $\mu_f = \chi(X_0) - \chi(X_t)$ denotes the Milnor number of $X_0$, $\varphi_0$ is a smooth function, $T_{m,m'}^{r,h}$ are constants, $m,m', h$ are non-negative numbers, and $r \in (0,2)\cap \bb Q$ are eigenvalues of the monodromy acting on the local Milnor fibers.
\end{T}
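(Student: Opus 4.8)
The plan is to feed Deligne's Riemann--Roch isomorphism (Theorem \ref{thm:riemann-roch}) into the asymptotic analysis of fibre integrals of Barlet \cite{Barlet}. Since $X$ is smooth, Proposition \ref{Prop:algebraic} shows that $f$ is a projective local complete intersection, so Theorem \ref{sing-of-metric} applies; moreover every point of $X$ is a smooth point, so $\mu_X=0$ and $\Delta_f=\chi(X_0)-\chi(X_t)=\mu_f$. Hence we already know
$$\log|\sigma|_Q=\rk E\,\frac{\mu_f}{12}\log|t|+\varphi,\qquad \varphi\ \text{continuous},$$
and the whole point is to refine ``$\varphi$ continuous'' to the claimed expansion $\varphi=\varphi_0+\sum T_{m,m'}^{r,h}t^m\overline t^{m'}|t|^r(\log|t|)^h$, with $\varphi_0$ smooth, $m,m',h\in\bb N$ and $r\in(0,2)\cap\bb Q$.

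First I would pass to the punctured disc $D^*$, where $f$ is a smooth proper submersion; the base $D^*$ is connected, so the genus $g$ of the fibre is constant there, and Theorem \ref{thm:riemann-roch} identifies $\lambda(E)^{12}$ with the Quillen metric and the product $\langle\omega,\omega\rangle^{\rk E}\langle\det E,\det E\otimes\omega^{-1}\rangle^6 IC_2(E)^{-12}$ with the metrics recalled above (written $\|\cdot\|_{\mathrm{Del}}$), up to an explicit additive constant $c$ in the logarithms given in that theorem. Fix a holomorphic frame $\tau$ of the right-hand side over all of $D$, and let $s_0$ be the section of the right-hand side mapping to $\sigma^{12}$ under the isomorphism; it is holomorphic and nowhere vanishing over $D^*$, and comparing the displayed formula above with the continuity of $\|\cdot\|_{\mathrm{Del}}$ from Proposition \ref{prop:deligne-metric} (this is essentially the content of Proposition \ref{Prop:discriminant}) shows that $s_0$ extends holomorphically over $D$ with $s_0=t^{\rk E\,\mu_f}\,w\,\tau$ for a nowhere vanishing holomorphic $w$ on $D$. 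Then over $D^*$
$$12\log|\sigma|_Q=\log\|s_0\|_{\mathrm{Del}}+c=\rk E\,\mu_f\log|t|+\log|w|+\log\|\tau\|_{\mathrm{Del}}+c,$$
so, since $\log|w|$ is harmonic (hence smooth) and $\log\|\tau\|_{\mathrm{Del}}$ extends continuously across $0$ by Proposition \ref{prop:deligne-metric}, the theorem reduces to showing that $\log\|\tau\|_{\mathrm{Del}}$ admits an asymptotic expansion of the required shape.

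Next I would reduce $\log\|\tau\|_{\mathrm{Del}}$ to fibre integrals and apply Barlet's theorem. Filtering $E$ by subbundles with line bundle quotients as in the proof of Proposition \ref{prop:deligne-metric} (following \cite{Bismut-Bost}) and applying the Whitney formula (property (\ref{whitney})), the $IC_2$ factor contributes terms $\frac{1}{2\pi i}\int_{X_t}\op{Tr}(\alpha^*\wedge\alpha)$; choosing meromorphic sections of $\omega$ and $\det E$ whose fibrewise zeros and poles move holomorphically and avoid the critical locus of $f$ near $0$, property (2) of the metric on Deligne products expresses the remaining factors as integrals $\int_{X_t}\log(h/h')\,c_1(\overline M)$ plus evaluations of fixed $C^\infty$ functions along those moving points, the latter manifestly smooth in $t$ near $0$. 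All the integrands are fixed $C^\infty$ $(1,1)$-forms on $X$. Writing $X$ as the union of a small neighbourhood $U$ of the finitely many critical points of $f$ (which all lie in $X_0$) and a complement $V$ with $f|_V$ a proper submersion, the integral over $V\cap X_t$ is $C^\infty$ in $t$ up to and including $0$ by Ehresmann's theorem, while the integral over $U\cap X_t$ falls under Barlet's theorem \cite{Barlet} (with continuity furnished by \cite{Barlet-continuity}): it equals a $C^\infty$ function of $(t,\overline t)$ plus a finite sum $\sum_{r,h}|t|^r(\log|t|)^h g_{r,h}(t,\overline t)$ with the $g_{r,h}$ smooth and $r$ running over the rationals in $(0,2)$ attached to the roots of the reduced Bernstein--Sato polynomials of the singular points, i.e. to the eigenvalues of the local monodromy on the Milnor fibres. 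Taylor expanding the smooth coefficients produces exactly the asserted form, and summing over the finitely many filtration steps, factors, and singular points preserves it.

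I expect the last step to be the main obstacle: verifying that the fibre integrals produced by the Deligne and $IC_2$ metrics are genuinely of the type governed by Barlet's theorem, separating cleanly the smooth ``background'' contribution (Ehresmann over $V$) from the local ones (Barlet over $U$), and matching the exponents $r$ coming out of Barlet with the monodromy eigenvalues in the range $(0,2)$ recorded in the statement. A subsidiary point is to make the reduction of $E$ to line bundle quotients work globally enough for the Whitney formula to apply, which, as in \cite{Bismut-Bost}, is harmless because only the class of $E$ together with the metric data enter $IC_2(\overline E)$.
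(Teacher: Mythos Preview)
Your overall architecture is exactly the paper's: reduce to Theorem~\ref{sing-of-metric} (using $\mu_X=0$ when $X$ is smooth so $\Delta_f=\mu_f$), then refine the continuous term by showing that the metrics on $\langle\,,\,\rangle$ and $IC_2$ admit the asserted Barlet-type expansion. The difference lies in how you pass from the Deligne metric to a fibre integral.

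The paper does not use meromorphic sections or ``evaluations at moving points''. Its reduction is shorter: pick an \emph{auxiliary} metric $h$ on $L$ whose curvature vanishes in a neighbourhood of the singular points (always possible by a partition of unity). Then for any $\overline M$ the integrand $c_1(L,h)\wedge c_1(\overline M)$ vanishes near the singularities, so the curvature formula (a) gives a smooth form on $D$ and hence $\langle(L,h),\overline M\rangle$ carries a \emph{smooth} metric across $0$. Now compare the given metric $h'$ on $L$ to $h$ using property~(b): the logarithm of the norm of the identity is the single fibre integral $\int_{X_t}\log(h/h')\,c_1(\overline M)$, a smooth $(1,1)$-form integrated over the fibres, to which Barlet applies after localising with a cut-off near each singular point. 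The same trick handles $IC_2$. Your route via sections and the Whitney formula can be pushed through, but what you call ``property~(2)'' is the comparison of two metrics on the \emph{same} bundle; it does not by itself produce the section-based description with evaluation terms that you invoke, so you would need to import an additional formula for the Deligne metric. The paper's auxiliary-metric trick avoids this entirely and stays within the two listed properties.

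A small side remark: in your $U/V$ decomposition, $f|_V$ is a submersion but need not be proper, so Ehresmann is not literally what you want; the point is simply that fibre integration of a smooth form supported where $f$ is a submersion yields a smooth function of $t$, which is what both you and the paper use (the paper leaves this implicit).
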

In the following two sections we will consider on one hand the singular term which contains the discriminant number $\Delta_f$, and in the next section the continuous part which admits the above asymptotic expansion.
\section{Singularity of the Quillen metric and discriminants}
The main result of this section relates the singularity of the Quillen metric to the Deligne discriminant (Proposition \ref{Prop-saito:discriminant}). We also connect it to discriminants in the sense of projective duality (Proposition \ref{Prop:discriminant}). \\ Suppose $X$ is a normal complex surface (e.g. complex dimension 2 with isolated singularities) and let $f: X \to D$ with the unit disc $D = \{z \in \bb C, |z| < 1\}$ be a proper holomorphic map which is a submersion outside of $D^* = D \setminus 0$, such that $f$ is furthermore projective and a local complete intersection. Consider the ring of holomorphic germs $R = \bb C\{t\}$ around 0 consisting of powerseries with positive radius of convergence and its spectrum $S = \Spec R$. We will need a version of \cite{Bismut-Bost}, Proposition 3.4, to make the situation algebraic enough that we can apply results which are usually stated for schemes:
\begin{Prop} \label{Prop:algebraic} If $X$ is smooth the condition that $f$ is projective and a local complete intersection is automatic, possibly after shrinking $D$. If the family $X \to D$ is already projective, it defines a projective scheme over $\Spec R$, denoted by $\mc X$. Any coherent sheaf on $X$ gives rise to coherent sheaf on $\mc X$.
\end{Prop}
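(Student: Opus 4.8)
The plan is to dispatch the three assertions separately, following the analogue \cite{Bismut-Bost}, Proposition~3.4.

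First, when $X$ is smooth, I would observe that $f$ is automatically a local complete intersection morphism: the graph $\Gamma_f\colon X\to X\times D$ is a closed immersion of smooth complex manifolds, hence a regular immersion, and it is followed by the smooth second projection $X\times D\to D$, so $f=\op{pr}_D\circ\Gamma_f$ is a regular immersion composed with a smooth morphism. For projectivity I would argue as follows. Since $f$ is proper, $X_0=f^{-1}(0)$ is a proper, hence projective, curve over $\bb C$; fix an ample line bundle $L_0$ on it. After shrinking $D$ we may assume that $X$ deformation retracts onto $X_0$, so $H^2(X,\bb Z)\cong H^2(X_0,\bb Z)$, and since $D$ is Stein and $R^2 f_*\calo_X=0$ the Leray spectral sequence gives $H^2(X,\calo_X)=0$; the exponential sequence then shows that the class of $L_0$ in $H^2(X_0,\bb Z)=H^2(X,\bb Z)$ lifts to some $L\in\Pic X$. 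As ampleness of a line bundle on a curve depends only on its cohomology class (positivity of degree on each component), $L|_{X_0}$ is ample, and a proper morphism carrying a line bundle ample on the central fibre is projective after a further shrinking of $D$.

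Next, assuming $f\colon X\to D$ projective, fix an $f$-ample line bundle $L$ (built above when $X$ is smooth, given otherwise). Each $f_*L^{\otimes n}$ is coherent on $D$; after shrinking $D$, the graded $\calo_D$-algebra $\mc A=\bigoplus_{n\ge0}f_*L^{\otimes n}$ is generated in bounded degree by finitely many global sections and $X\cong\op{Proj}_D\mc A$. Taking germs of sections at $0$ produces a finitely generated graded $R$-algebra $A=\bigoplus_{n\ge0}(f_*L^{\otimes n})_0$, and I would set $\mc X=\op{Proj}A$, a projective $R$-scheme with special fibre $X_0$. Since $\op{Proj}$ commutes with analytification, the analytification $\mc X^{\mathrm{an}}$ is exactly the germ of $X$ along $X_0$, and GAGA for the projective $R$-scheme $\mc X$ identifies its coherent sheaves with the coherent analytic sheaves on that germ; restricting to a neighbourhood of $X_0$ then attaches to each coherent sheaf on $X$ a unique coherent sheaf on $\mc X$, which also fixes $\mc X$ once $L$ is chosen.

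The first two points are routine — a morphism of regular spaces is a local complete intersection, and lifting an ample class off a curve is a short cohomological computation. The main obstacle is the last step: one must be careful that $R=\bb C\{t\}$ is not of finite type over $\bb C$, so the comparison used is not classical projective GAGA but GAGA over the disc (equivalently, over the excellent henselian local base $\Spec R$); alternatively, one may apply Artin approximation to the convergent power series cutting $X$ out of some $\bb P^N_D$ to produce $\mc X$ directly. Making this precise — and checking that the resulting algebraic family carries the required coherent sheaves and relative dualizing sheaf — is exactly the work done in \cite{Bismut-Bost}, Proposition~3.4 in the nodal case, and it carries over unchanged here.
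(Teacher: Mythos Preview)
Your argument is correct; the construction of the projective $R$-scheme via $\op{Proj}$ of the graded section ring and the passage to coherent sheaves match the paper's proof essentially verbatim, and the local complete intersection claim is dispatched the same way (smooth over smooth).

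The genuine difference is in how you produce the relatively ample line bundle when $X$ is smooth. The paper follows Bismut--Bost literally: pick a smooth point $p_i$ on each reduced irreducible component $C_i$ of $X_0$, extend each $p_i$ to a section $P_i\colon D\to X$ after shrinking the disc, invoke Grauert's criterion to see that $\calo(\sum p_i)$ is ample on $X_0$, and conclude that some $\calo(\sum nP_i)$ is $f$-very ample. You instead lift an abstract ample class on $X_0$ to $\Pic X$ through the exponential sequence, using $H^2(X,\calo_X)=0$ and the retraction $H^2(X,\bb Z)\cong H^2(X_0,\bb Z)$, and then appeal to the numerical criterion for ampleness on curves to see that the restriction to $X_0$ stays ample. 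Your route is cleaner and avoids the explicit construction of sections, at the cost of invoking slightly more cohomological machinery; the paper's route is more hands-on and yields a concrete very ample $L$ directly, which is convenient for the subsequent $\op{Proj}$ description. Either approach feeds identically into the rest of the proof.
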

\begin{proof} First suppose $X$ is smooth and let $C_i, i = 1, \ldots, k$ be the reduced irreducible components of $X_0$, and let $p_i$ be a smooth point on each $C_i$. Then for a small open neighborhood $U_i$ of 0 we obtain a section $P_i: U_i \to X$. Replacing $D$ with a smaller disc we thus have sections $P_i: D \to X$. By \cite{Grau}, Satz 4, the line bundle $\calo(\sum p_i)$ is ample on $X_0$. One concludes as in \cite{Bismut-Bost}, Proposition 3.4 that for some $n$, $L = \calo(\sum nP_i)$ is very ample with respect to $f$. Now assume $X \to D$ is already projective. Because $D$ is Stein, the cohomology sheaves $f_* L^i$ are finite type modules over the ring of holomorphic functions on the disc, and $\oplus f_* L^i$ then forms a homogenous graded ring over the ring of holomorphic functions on the disc. Tensoring this ring with $R$ gives a homogenous graded ring over $R$ whose Proj is the sought after scheme. The statement about coherent sheaves follows the same lines. Finally, the condition that $f$ is a local complete intersection is automatic if the varieties are smooth.
\end{proof}
In the above situation, Deligne's isomorphism induces an abstract isomorphism over $D$:
$$\lambda(E)^{12} \simeq \langle\omega, \omega \rangle^{\op{rk} E}  \langle \det E, \det E \otimes \omega^{-1} \rangle^6 IC_2(E)^{-12} \calo([0] \cdot n)$$
where $n$ is a natural number and the sheaf $\calo([0] \cdot n)$ denotes the sheaf of holomorphic functions with a zero of order at least $n$ at 0. This sheaf will be responsible for the singularity of the Quillen metric at 0 and we intend to give a geometric interpretation of this singularity in terms of projective duality. For this, denote by $\check {\bb P}^N$ the dual projective variety whose points parametrize hyperplane sections in $\bb P^N$. The discriminant variety, $\Delta_X \subseteq \check {\bb P}^N$ of $X \subseteq \bb P^N$, where we suppose $X$ is not contained in any linear subspace, is the variety of hyperplane sections $H$ such that $H \cap X$ is singular. Whenever $X$ is a smooth complex algebraic surface (i.e. complex dimension is 2) of degree at least 2 in $\bb P^N$ a theorem of Ein (cf. \cite{Ein}) says that $\Delta_X \subseteq \bb P^N$ is a hypersurface, in particular it is given by a polynomial (the discriminant) defined up to a constant. We can apply the Deligne-Riemann-Roch isomorphism to the tautological family of hyperplane sections $\mc H \to \check {\bb P}^N$.
\begin{Prop} \label{Prop:discriminant} [compare \cite{discriminant-conductor}, Proposition 3.1] In the above situation, let $\omega = \omega_{\mc H/\check {\bb P}^N}$ be the relative dualizing sheaf. Then the Riemann-Roch isomorphism over $\check {\bb P}^N \setminus \Delta_X$ extends to an isomorphism
$$\lambda(\omega)^{12} \simeq \langle \omega, \omega \rangle \otimes \calo(\Delta_X)$$
over $\check {\bb P}^{N}$ so the discriminant section is represented by the discriminant hypersurface.
\end{Prop}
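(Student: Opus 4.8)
The plan is to specialize Deligne's isomorphism (Theorem \ref{thm:riemann-roch}) to the case $E=\omega$, recognize the resulting discriminant section as a rational section of a genuine line bundle on all of $\check{\bb P}^N$, note that its divisor is supported on $\Delta_X$, and pin the multiplicity to $1$ by a degree computation over a Lefschetz pencil.

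First reduce the shape of the isomorphism. Since $\omega$ is a line bundle one has $\det\omega=\omega$, so $\langle\det\omega,\det\omega\otimes\omega^{-1}\rangle=\langle\omega,\calo\rangle$ is canonically trivial, and $IC_2(\omega)$ is trivial by the first characterizing property of $IC_2$. Hence over the smooth locus $U:=\check{\bb P}^N\setminus\Delta_X$ of the tautological family $p\colon\mc H\to\check{\bb P}^N$, Theorem \ref{thm:riemann-roch} reduces to a canonical isomorphism, up to sign, $\lambda(\omega)^{12}\simeq\langle\omega,\omega\rangle$. Because $\mc H\subseteq X\times\check{\bb P}^N$ is the incidence variety it is a $\bb P^{N-1}$-bundle over $X$, hence smooth, and $p$ is flat and projective of relative dimension $1$; so $\omega=\omega_{\mc H/\check{\bb P}^N}$ is a line bundle and $\lambda(\omega)$, $\langle\omega,\omega\rangle$ are honest line bundles on all of $\check{\bb P}^N$ (the Knudsen--Mumford determinant and the Deligne pairing need only a flat projective relative curve). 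The isomorphism above therefore defines a nonzero rational section $\delta$ of $\mc L:=\lambda(\omega)^{12}\otimes\langle\omega,\omega\rangle^{-1}$ over $\check{\bb P}^N$ which is regular and nowhere vanishing on $U$; so $\op{div}(\delta)$ is supported on $\Delta_X$, which by Ein's theorem (here $\deg X\ge 2$ is used) is an irreducible hypersurface. Thus $\op{div}(\delta)=m\,[\Delta_X]$ for some $m\in\bb Z$, and the claim is that $m=1$.

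To compute $m$, restrict to a generic line $\ell\cong\bb P^1\subseteq\check{\bb P}^N$, that is to a Lefschetz pencil of hyperplane sections with smooth axis $X\cap\Lambda$. Then $\widetilde X:=\mc H\times_{\check{\bb P}^N}\ell$ is the blow-up $\op{Bl}_{X\cap\Lambda}X$, a smooth projective surface, and $q\colon\widetilde X\to\bb P^1$ is a Lefschetz fibration whose $d:=\deg\Delta_X$ singular fibres each carry exactly one node. By compatibility of Deligne's isomorphism with base change, $\delta|_\ell$ is the discriminant section of $q$, with $\omega_{\widetilde X/\bb P^1}$ the restriction of $\omega_{\mc H/\check{\bb P}^N}$; since near each critical value the family is analytically the model $(x,y)\mapsto xy$, the local multiplicity of $\delta|_\ell$ is the same integer at each, so $\deg_\ell\delta|_\ell=m\,d$. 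On the other hand this degree is computable on the smooth surface $\widetilde X$: $\deg_\ell\langle\omega_{\widetilde X/\bb P^1},\omega_{\widetilde X/\bb P^1}\rangle=(\omega_{\widetilde X/\bb P^1})^2$ since the degree of a Deligne pairing over a complete curve is an intersection number, and $\deg_\ell\lambda(\omega_{\widetilde X/\bb P^1})=\tfrac1{12}\bigl(c_1(\omega_{\widetilde X/\bb P^1})^2+c_2(T^{\mathrm{vir}})\bigr)$ by Grothendieck--Riemann--Roch for $\det Rq_*$, with virtual relative tangent $T^{\mathrm{vir}}=[T_{\widetilde X}]-[q^*T_{\bb P^1}]$. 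Writing $\omega_{\widetilde X/\bb P^1}=K_{\widetilde X}+2F$ ($F$ the fibre class), using $K_{\widetilde X}\cdot F=2g-2$, $F^2=0$ and $c_2(T^{\mathrm{vir}})=\chi(\widetilde X)+2K_{\widetilde X}\cdot F$, the $K_{\widetilde X}^2$ contributions cancel and $\deg_\ell\delta|_\ell=\chi(\widetilde X)+4g-4$, where $g$ is the genus of a general hyperplane section. Finally the topological Euler characteristic of a Lefschetz fibration with $d$ one-nodal fibres is $\chi(\widetilde X)=(2-d)(2-2g)+d(3-2g)=4-4g+d$, hence $\deg_\ell\delta|_\ell=d$. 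Comparing, $m\,d=d$, so $m=1$; this gives $\op{div}(\delta)=[\Delta_X]$ and hence the claimed extension $\lambda(\omega)^{12}\simeq\langle\omega,\omega\rangle\otimes\calo(\Delta_X)$ of the Riemann--Roch isomorphism.

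The main obstacle is not the degree bookkeeping but the structural inputs it presupposes: that $\lambda$ and $\langle\,,\,\rangle$ genuinely extend as line bundles across $\Delta_X$, that Deligne's isomorphism is functorial under the base change to $\ell$ so that $\delta|_\ell$ really is the discriminant section of $q$, and that the multiplicity of $\delta|_\ell$ depends only on the local analytic type of $q$, so that it is constant over the generic point of the irreducible hypersurface $\Delta_X$. Granting these, together with the standard transversality properties of a generic Lefschetz pencil --- smooth axis, $d$ distinct critical values, $\ell\cap\Delta_X$ reduced, one node per singular fibre --- the value $m=1$ is forced; the hypothesis $\deg X\ge 2$ must be retained throughout since it is exactly what Ein's theorem requires for $\Delta_X$ to be a hypersurface.
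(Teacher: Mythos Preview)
Your argument is correct and shares the paper's overall structure: both reduce to showing $\op{div}(\delta)=m[\Delta_X]$ for an integer $m$ using irreducibility of $\Delta_X$, and both restrict to a Lefschetz pencil $\ell\subset\check{\bb P}^N$ to determine $m$. The difference lies in how $m=1$ is extracted. The paper simply invokes the well-known local fact that at an ordinary node the Deligne discriminant has order exactly $1$ (equivalently, the Mumford isomorphism over $\overline{\mc M}_g$), so each of the $d$ critical values contributes $1$ and hence $m=1$. You instead compute the total degree $\deg_\ell(\delta|_\ell)$ globally: Grothendieck--Riemann--Roch gives $12\deg\lambda(\omega)=(\omega_{\widetilde X/\bb P^1})^2+c_2(T^{\mathrm{vir}})$, the Deligne pairing contributes $(\omega_{\widetilde X/\bb P^1})^2$, and the Euler-characteristic count for the Lefschetz fibration yields $c_2(T^{\mathrm{vir}})=\chi(\widetilde X)+4g-4=d$, forcing $m=1$. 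Your route is longer but more self-contained, since it does not presuppose the local nodal computation; the paper's route is shorter but leans on that ``well known'' input. Note incidentally that your global computation is essentially a proof of that local fact, since it shows the common local multiplicity $m'$ at each node satisfies $m'd=d$.
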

\begin{proof} Since the Riemann-Roch discriminant is an isomorphism outside of the discriminant variety and the discriminant variety is an integral variety we immediately have that the degeneration is measured by $\calo(k \Delta_X)$. We want to prove that $k = 1$ by using a Lefschetz pencil $\bb P^1 \subseteq \check {\bb P}^N$. The restriction of the isomorphism to any such pencil $\bb P^1$ gives a family of Riemann surfaces whose singular fibers have exactly one singular point which is a non-degenerate quadratic singularity. In this case it is well known on one hand that the order of zero of the discriminant is 1, and on the other that the order of degeneration of the Deligne isomorphism is 1 and we conclude that $k = 1$.
\end{proof}
Thus, in the case of families of hyperplane sections of a surface we have an interpretation of the Riemann-Roch discriminant as an "actual discriminant". This allows us to study, in this special case, the degeneration. Let $X$ be as above, and suppose that we are given a singular hyperplane section $H$ with isolated singularities. Pick any other hyperplane $H'$ such that $\# \{H \cap H'\} = \deg X$. The base locus of $X \to \bb P^1$ given by $x \mapsto [H(x): H'(x)]$ is $H \cap H'$ which is reduced and smooth. Then blowing up $H \cap H'$ produces a smooth surface $X'$ whose map to $\bb P^1$ gives a family of hyperplane sections. In this particular case it is well known (cf. \cite{Dimca-Milnor}) that the vanishing of the discriminant, and thus by the above proposition the degeneration of the Deligne-isomorphism, is given by the Milnor number of the singular fiber. Hopefully the above geometric considerations will shed some light on the following general statement:
\begin{Prop}[\cite{T.Saito-conductor}, \cite{discriminant-conductor}, Theorem 1.4] \label{Prop-saito:discriminant} Let $E$ be a vector bundle on $X$, and \linebreak $X \to D$ a projective local complete intersection. Then the order of degeneration of the Riemann-Roch-discriminant at 0 is $\rk E \cdot \Delta_f$, where $$\Delta_f = \mu_X + \chi(X_0) - \chi(X_t).$$ Here $\mu_X$ denotes the Milnor number of $X$,  $\chi$ denotes the topological Euler-characteristic and $X_t$ is any fiber over $t \neq 0$.
\end{Prop}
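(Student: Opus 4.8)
The plan is to reduce the statement to the case of the trivial bundle $E = \calo_X$, compute the resulting order of degeneration by T. Saito's conductor formula when the total space is regular, and recover the general case through a resolution of singularities.

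First I would remove the dependence on $E$. Write $\delta_E$ for the Riemann--Roch discriminant, i.e. the rational section of $\Hom(\langle\omega,\omega\rangle^{\rk E}\langle\det E, \det E\otimes\omega^{-1}\rangle^6 IC_2(E)^{-12}, \lambda(E)^{12})$ induced by Theorem \ref{thm:riemann-roch}. All the ingredients of Deligne's isomorphism are additive in short exact sequences --- $\lambda$ by the Knudsen--Mumford formalism, $IC_2$ by the Whitney formula (property (\ref{whitney})), the Deligne products by biadditivity --- and by Proposition \ref{prop:deligne-metric} the Deligne products and the $IC_2$-bundles, together with the rearrangement isomorphisms coming from biadditivity and from the Whitney formula applied to split sequences, extend over $0$ to isomorphisms whose norms are continuous and bounded away from $0$, hence to non-degenerate isomorphisms. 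Therefore $\mathrm{ord}_0(\delta_E)$ is additive in $E$; filtering $E$ by subsheaves with line-bundle quotients as in \cite{Bismut-Bost} (compare the proof of Proposition \ref{prop:deligne-metric}) and using that $IC_2(L)$ is trivial for a line bundle $L$, one gets $\mathrm{ord}_0(\delta_E) = \rk E \cdot \mathrm{ord}_0(\delta_{\calo_X})$. It then suffices to compute $\Delta_f := \mathrm{ord}_0(\delta_{\calo_X})$, the order at $0$ of the rational section of $\Hom(\langle\omega,\omega\rangle, \lambda(\calo_X)^{12})$ extending the Mumford isomorphism of $X/D$.

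Next I would algebraize: by Proposition \ref{Prop:algebraic} the family becomes a projective relative curve $\mc X$ over the trait $\Spec R$, $R = \bb C\{t\}$ (henselian, and harmless to complete to $\bb C[[t]]$), with smooth generic fibre. When $X$, hence $\mc X$, is regular, this is exactly the setting of T. Saito's conductor--discriminant formula \cite{T.Saito-conductor}, which identifies $\Delta_f$, up to sign, with the Artin conductor of $\mc X/\Spec R$; in equal characteristic zero the Swan term vanishes, so $\Delta_f = \chi(X_0) - \chi(X_t)$. Since $\mu_X = 0$ when $X$ is smooth, this settles that case.

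For the general case I would take a resolution $\pi \colon X' \to X$, apply the regular case to the relative curve $X' \to D$, and compare the two Mumford discriminants along $\pi$. Three corrections enter: $\lambda(\calo_{X'})$ differs from $\lambda(\calo_X)$ by $\det Rf_*(R^1\pi_*\calo_{X'})$, a torsion $\calo_D$-module of length $p_g$; $\langle\omega_{X'/D},\omega_{X'/D}\rangle$ differs from $\langle\pi^*\omega_{X/D},\pi^*\omega_{X/D}\rangle$ by the self-intersection $\Gamma^2$ of the discrepancy divisor; and $\chi(X'_0)$ differs from $\chi(X_0)$ by the Betti numbers $b_1(E)$ and $b_2(E)$ of the exceptional set. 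By the Milnor-number formula recalled before Theorem \ref{sing-of-metric}, $\mu_{X,x} = 12 p_g + \Gamma^2 - b_1(E) + b_2(E)$, summed over the singular points, these corrections combine into exactly $\mu_X$, giving $\Delta_f = \mu_X + \chi(X_0) - \chi(X_t)$. I expect the main obstacle to be this last comparison --- matching the three geometric corrections term by term with the local Milnor numbers, which is the content of \cite{discriminant-conductor}, Theorem 1.4 --- together with checking that Saito's regularity and base-change hypotheses are in force after the algebraization of Proposition \ref{Prop:algebraic}; the reduction to $\calo_X$ in the first step is routine given Proposition \ref{prop:deligne-metric}.
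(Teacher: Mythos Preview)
Your overall strategy --- reduce to the trivial bundle by filtration and additivity, then invoke the cited results for $E=\calo_X$ --- matches the paper's. Two differences are worth flagging.

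First, you go further than the paper in your second and third paragraphs: the paper simply \emph{cites} \cite{T.Saito-conductor} and \cite{discriminant-conductor}, Theorem 1.4, as a black box for the value $\Delta_f=\mu_X+\chi(X_0)-\chi(X_t)$ when $E=\calo_X$ or $\omega$, and does not redo the resolution-of-singularities comparison you sketch. Your outline of that comparison (the corrections $12p_g$, $\Gamma^2$, and the Betti numbers of the exceptional set recombining into $\mu_X$) is plausible and is essentially the content of the cited reference, but it is not part of the present proof.

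Second, there is a genuine gap in your reduction step. Additivity indeed yields $\mathrm{ord}_0(\delta_E)=\sum_i \mathrm{ord}_0(\delta_{L_i})$ for the line-bundle subquotients $L_i$, but ``$IC_2(L)$ is trivial'' does \emph{not} by itself give $\mathrm{ord}_0(\delta_L)=\mathrm{ord}_0(\delta_{\calo_X})$ for an arbitrary line bundle $L$: the quotient $\delta_L/\delta_{\calo_X}$ is a rational section of $\Hom\!\big(\langle L,L\omega^{-1}\rangle^{6},\,\lambda(L)^{12}\lambda(\calo_X)^{-12}\big)$, and one must still explain why this particular section is regular and nonvanishing at $0$. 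The continuity of the Deligne-pairing metric (Proposition \ref{prop:deligne-metric}) does not settle this, since it says nothing about the order of an algebraically defined rational section. The paper closes this gap with Serre duality (valid for the local complete intersection $f$): for any line bundle $L$ the Riemann--Roch isomorphism
\[
\det Rf_*(L-\omega)^{2}\;\simeq\;\langle L\omega^{-1},\,L\omega^{-2}\rangle
\]
holds \emph{globally} over $D$, with no degeneration at $0$, which forces $\mathrm{ord}_0(\delta_L)=\mathrm{ord}_0(\delta_{\omega})$. Combined with the cited case $E=\omega$ (or $\calo_X$) this gives the line-bundle case, and then additivity finishes. This Serre-duality step is the missing ingredient in your first paragraph.
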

\begin{proof} The result in \emph{loc. cit. } states that the order of the degeneration of the Riemann-Roch-discriminant is $\Delta_f$, when $E = \omega$ or $\calo_X$. We have to show that the general result follows from this. As in \cite{determinant} one applies Serre-duality, which is valid for local complete intersection $f$, and see that Riemann-Roch isomorphism gives a global isomorphism over $D$
$$\det Rf_* (L - \omega)^2 \simeq \langle L \otimes \omega^{-1}, L \otimes \omega^{-2} \rangle$$
without any degeneration. In general, by the projectivity assumption, the argument of \cite{Bismut-Bost}, Proposition 3.5, allows us to filter any vector bundle by vector bundles whose successive quotients are line bundles. The result follows from an apparent additivity on both sides.
\end{proof}
Now, by the second part of Theorem \ref{thm:riemann-roch} the norm of the Deligne-isomorphism 
$$\det Rf_* (E)^{12} \simeq  \langle\omega, \omega \rangle^{\op{rk} E}  \langle \det E, \det E \omega^{-1} \rangle^6 IC_2(E)^{-12} \calo(\rk E \Delta_f \cdot [0])$$
is constant in the family outside of $t = 0$, so the study of the Quillen metric close to 0 is equivalent to that of the right hand side. We note that there is a natural singular metric on $\calo(\rk E \Delta_f \cdot [0])$. Indeed, consider the section $t^n$ of $\calo(n \cdot [0])$ and put $\|t^n\| = |t|^n$. Then this defines a singular metric on $\calo(n \cdot [0])$ whose curvature is $n \cdot \delta_0$. The metrics on the Deligne bundles is continuous at 0 by Proposition \ref{prop:deligne-metric}. It then follows that if $\sigma$ is a local holomorphic frame at $0$ of $\lambda(E)$, then $$\log|\sigma|_Q - \frac{\rk E \cdot \Delta_f}{12} \log|t|$$ is continuous at 0. This proves Theorem \ref{sing-of-metric}. \\
In the next section we will follow the method of \cite{Bismut-Bost}, but apply at a certain moment \cite{Barlet} to obtain more precise results on the continuity when the total space $X$ is smooth and only having isolated singularities in $X_0$. 

\section{The contribution from monodromy}\label{monodromy}
In this section we suppose that the total space $X$ is smooth, that $X \to D$ is a holomorphic submersion outside of finitely many points in $X_0$. Around each singular point, consider locally the map $(\bb C^2,0) \to (\bb C, 0)$. Given a small $\epsilon > 0$, consider the sphere $S^3_\epsilon$ of radius $\epsilon$ around 0, denote by $V(f) = \{f = 0\}$ the vanishing set of $f$ and set $K = V(f) \cap S$.  Then, for small $\epsilon$,  $\phi: S^3_\epsilon \setminus K \to S^1$ given by $z \mapsto f(z)/|f(z)|$, is by Milnor \cite{Milnor} a fibration, and for $p \in S^1$, $F_f = \phi^{-1}(p)$ has the homotopy-type of a bouquet of $\mu$ spheres, where $\mu$ is the Milnor number of the singularity. We also have $\mu = \dim_\bb C \bb C \{x,y \}/(\partial f/\partial x, \partial f/\partial y)$. The monodromy-action of $T$ is then the induced action of going around $S^1$ on $H_1(F_f) = \bb Z^\mu$. The eigenvalues of $T$ are known to be of the form $\exp(\pi i r)$ for rational numbers $r$. They appear as roots of the local Bernstein-Sato-polynomial, by results of Malgrange (see in particular \cite{Malgrange}). Barlet \cite{Barlet} then relates the roots of the Bernstein-Sato polynomial to asymptotic developments of integration along fibers of forms, of which the following is a special case:
\begin{T}[Barlet, \cite{Barlet}] Let $f: (\bb C^2, 0) \to (\bb C, 0)$ be the germ of a holomorphic function, and let $\phi$ be a smooth $(1,1)$-form with compact support on $\bb C^2$. Then
$F_\phi: D \to \bb C$ given by 
$$t \mapsto \int_{X_t} \phi$$ has the following asymptotic expansion:
$$F_\phi \sim  \sum_{r \in \{r_1, \ldots, r_k\}} \sum_{m,m' \in \bb N_+} \sum_{h \in \bb N_+} T_{m,m'}^{r,h}(\phi) t^m \overline{t}^{m'}|t|^{r}\log|t|^h$$
for certain $(1,1)$-currents $T_{m,m'}^{r,h}$ on $\bb C^2$. The numbers $r_i \in (0,2) \cap \bb Q$ appear as monodromy eigenvalues in the local Milnor fiber.  \\
\end{T}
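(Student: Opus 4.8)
The plan is to reduce the claim to a statement about the Mellin transform of the fiber-integral function and then invoke the structure theory of $b$-functions (Bernstein–Sato polynomials). First I would set $F_\phi(t) = \int_{X_t}\phi$ and observe that, since $\phi$ is a smooth compactly supported $(1,1)$-form and $f$ is a nonconstant holomorphic germ, $F_\phi$ is a continuous (in fact $C^\infty$ away from the discriminant) function on a small disc $D$ that is integrable; the content is the behavior as $t\to 0$. The key analytic object is the family of integrals $I(\lambda,\mu) = \int_{\bb C^2} |f|^{2\lambda} f^{-\mu}\,\psi$ for test forms $\psi$, or equivalently the distribution-valued meromorphic continuation of $|f|^{2\lambda}$ in the sense of Bernstein–Gel'fand; this is where the functional equation $b(\lambda)\,|f|^{2\lambda} = P(\lambda)\,|f|^{2(\lambda+1)}$ enters. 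The poles of the continued distribution $|f|^{2\lambda}$ lie at $\lambda = -1 - (\text{roots of } b_f)/2 - \bb N$, and the roots of $b_f$ are, by Malgrange's theorem cited in the excerpt, of the form $-r/2$ with $e^{\pi i r}$ ranging over the monodromy eigenvalues of the local Milnor fiber; in particular one may take $r\in(0,2)\cap\bb Q$.

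Next I would relate the asymptotic expansion of $F_\phi$ at $0$ to these poles via a Mellin-transform argument in the radial variable. Writing $t = \rho e^{i\theta}$ and expanding $\phi$ in a Fourier series in the relevant angular variables, each Fourier mode of $F_\phi$ is, up to elementary factors, the inverse Mellin transform of a meromorphic function whose poles are exactly those of $I(\lambda,\mu)$ above, with multiplicities bounded by the multiplicities of the roots of $b_f$ (whence the $(\log|t|)^h$ factors, $h$ bounded by the order of the corresponding root, so $h \in \bb N$). Shifting the Mellin contour past successive poles produces the terms $t^m\overline t^{m'}|t|^r(\log|t|)^h$: the half-integer part of a pole at $-1-r/2-k$ contributes $|t|^{r}$ times an integer power $|t|^{2(k-\,\cdot\,)}$, which after collecting angular modes becomes $t^m\overline t^{m'}$ with $m,m'\in\bb N$; the holomorphic and antiholomorphic directions are separated because $f$ and $\overline f$ are independent and the angular integration picks out integer frequencies. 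The coefficients $T^{r,h}_{m,m'}$ are, by construction, the residues (or iterated residues) of the meromorphic continuation evaluated against $\phi$, hence linear in $\phi$; since a smooth compactly supported form pairs continuously with these residue currents, the $T^{r,h}_{m,m'}$ are $(1,1)$-currents on $\bb C^2$ as asserted. The fact that the expansion is genuinely asymptotic — i.e., truncating at order $N$ leaves an error $o(|t|^N)$ — follows from the standard estimate on the inverse Mellin integral along the shifted contour, using that away from the poles the continued distribution has at most polynomial growth in vertical strips.

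The main obstacle, and the place where one must be careful rather than merely cite generalities, is the passage from the pole structure of the global Mellin/Bernstein continuation to the \emph{pointwise} asymptotic expansion of $F_\phi$ near $0$, uniformly enough to conclude an asymptotic series: one needs the vertical-strip estimates for $|f|^{2\lambda}$ paired against $\phi$ (a resolution-of-singularities argument à la Bernstein–Gel'fand or the theory of $\mathcal D$-modules makes this uniform), and one needs to know that no \emph{extra} poles are created by the angular Fourier decomposition or by the non-properness issues at the boundary of the support of $\phi$ — this is handled by the compact support hypothesis, which confines everything to a neighborhood of $0$ where Milnor's fibration theorem applies. This is precisely the technical heart of Barlet's paper \cite{Barlet}, so in the present article I would state it as a black box in the form above and only indicate that the monodromy eigenvalues $e^{\pi i r}$ with $r\in(0,2)\cap\bb Q$ govern the exponents, deferring the analytic details to \emph{loc. cit.}
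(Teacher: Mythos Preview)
The paper gives no proof of this statement at all: it is stated purely as a citation of Barlet's result \cite{Barlet} and used as a black box, exactly as you yourself conclude in your final paragraph. Your sketch of the Mellin-transform/Bernstein--Sato mechanism is a reasonable outline of what underlies Barlet's theorem, but there is nothing to compare it to in the present paper---so for the purposes of this article your last sentence (``state it as a black box \ldots deferring the analytic details to \emph{loc.\ cit.}'') is already the paper's entire ``proof''.
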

We aim to study the behavior of the metrics on the Deligne bundles $\langle , \rangle, IC_2$ close to the singular fiber. We treat the bundle $\langle, \rangle$, $IC_2$ is treated in the same way. Suppose first that $\overline L, \overline M$ are two hermitian line bundles on $X$, where $X \to D$ is as in the theorem, such that the curvature forms are zero in a neighborhood of the singular points. This can always be found by a partition of unity. Then the curvature \linebreak $c_1(\langle \overline L, \overline M \rangle) = \int_{X/D} c_1(\overline L) \wedge c_1(\overline M)$ is also smooth, and thus the metric must necessarily be smooth. Now, pick any such metric on $L$, denoted by $h$, and $h'$ any other metric on the same line bundle, and let $\overline M$ be any hermitian line bundle. The norm of the identity-map $\langle (L,h), \overline M \rangle = \langle (L, h'), \overline M \rangle$ is equal to $$\int_{X/D} \log (h/h') c_1(\overline M),$$
which is a fibre integral as above. Thus we can compare any Deligne metric on $\langle L, M \rangle$ with a smooth one, the difference being a series of fiber integrals.  In other words:
\begin{Cor} Let $\sigma$ be a local section of the hermitian line bundle $\langle \overline L, \overline M \rangle$ such that $\sigma(0) \neq 0$. Then $\log|\sigma|$ has an asymptotic expansion at 0 of the form described above. Moreover, by the same argument the same statement holds for $IC_2(\overline E)$.
\end{Cor}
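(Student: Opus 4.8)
The plan is to reduce the claim for a general local section to the special normalization already set up in the paragraph preceding the statement. First I would fix a local holomorphic frame $e$ of $\langle \overline L, \overline M\rangle$ near $0$ and write $\sigma = g\cdot e$ with $g$ a holomorphic germ, $g(0)\neq 0$; then $\log|\sigma| = \log|g| + \log|e|$, and since $\log|g|$ is real-analytic and nonvanishing at $0$ it trivially has an expansion of the asserted shape (only the $m=m'=0$, $r=0$, $h=0$ terms, i.e. it is subsumed in $\varphi_0$). So it suffices to produce the expansion for $\log|e|$, equivalently for the metric on $\langle\overline L,\overline M\rangle$ evaluated on a nowhere-vanishing holomorphic frame.

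Next I would invoke the construction in the text: using a partition of unity write $c_1(\overline L)$ and $c_1(\overline M)$ so that one may choose metrics $h$ on $L$ with curvature vanishing in a neighborhood of the singular points of $X_0$. For such a choice the curvature of $\langle (L,h),\overline M\rangle$ equals $\int_{X/D} c_1(L,h)\wedge c_1(\overline M)$, an honest smooth form near $0$ because the integrand is supported away from the singular fibres of $f$; hence that particular Deligne metric is smooth, and its logarithm on a frame is the smooth function $\varphi_0$. Finally, for the given (arbitrary $C^\infty$) metric $h'$ on $L$, property (2) of the Deligne metric gives that the ratio of the two metrics on $\langle L,M\rangle$ is $\exp\!\big(\int_{X_t}\log(h/h')\,c_1(\overline M)\big)$. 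The function $\log(h/h')$ is a smooth function on $X$ (it is the logarithm of a ratio of two metrics on the same line bundle) and $c_1(\overline M)$ is a smooth $(1,1)$-form; after multiplying by a cutoff which is $1$ near the finitely many singular points and shrinking $D$, the integral splits into a finite sum of local integrals of the form $\int_{X_t}\phi$ with $\phi$ a compactly supported smooth $(1,1)$-form on $(\bb C^2,0)$, plus a fibre integral over the region where $f$ is a submersion which is smooth in $t$. Applying Barlet's theorem to each local piece yields the asymptotic expansion $\sum T^{r,h}_{m,m'}t^m\overline t^{m'}|t|^r(\log|t|)^h$ with $r\in(0,2)\cap\bb Q$ the local monodromy eigenvalues, and adding back the smooth contributions absorbs into $\varphi_0$.

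For $IC_2(\overline E)$ the same argument runs verbatim once one filters $E$ by subbundles with line bundle quotients (as in the proof of Proposition \ref{prop:deligne-metric}, following \cite{Bismut-Bost}); the Whitney formula property (\ref{whitney}) of $IC_2$ expresses $\log$ of the $IC_2$-metric as a sum of $\log$'s of Deligne-pairing metrics $\langle\det E',\det E''\rangle$ together with fibre integrals $\tfrac{1}{2\pi i}\int_{X_t}\op{Tr}(\alpha^*\wedge\alpha)$ of smooth forms, each of which is again covered by Barlet's theorem. Since $IC_2(L)$ is trivial for a line bundle, the induction terminates cleanly.

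The main obstacle is the bookkeeping at the reduction step: one must check that cutting off $\log(h/h')\,c_1(\overline M)$ (resp. the Whitney form $\op{Tr}(\alpha^*\wedge\alpha)$) by a partition of unity adapted to the singular points of $X_0$ really does split the fibre integral into (i) a part that is genuinely smooth in $t$ near $0$ — because it is supported in the locus where $f$ is a proper submersion, so ordinary integration along the fibre applies — and (ii) finitely many pieces each living in a coordinate ball $(\bb C^2,0)$ around a singular point with compactly supported smooth integrand, so that Barlet's theorem applies literally. This is routine but is the only place where the hypothesis that $X$ is smooth with only isolated singularities in $X_0$ is used in an essential way.
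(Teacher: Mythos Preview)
Your proposal is correct and follows essentially the same route as the paper: compare the given Deligne metric to a reference metric whose curvature vanishes near the singular points (hence is smooth), observe that the ratio is a fibre integral of a smooth form via property~(b), and feed that into Barlet's theorem after localizing near each singular point. Your treatment of $IC_2(\overline E)$ via a filtration and the Whitney isomorphism is more explicit than the paper's terse ``by the same argument,'' but amounts to the same reduction to fibre integrals of smooth forms.
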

This concludes the proof of Theorem \ref{thm:asymptotic}. 
\section{The example of degenerating elliptic curves}
We finally wish to give an example which is not covered by the general theory of \cite{Bismut-Bost}. In this example we use various standard facts of Weierstrass models from the theory of elliptic curves. Let $f: X \to D$ in $\bb P^2 \times D$ be given by a Weierstrass equation $$y^2 + a_1xy + a_3y= x^3 + a_2x^2 + a_4 x + a_6$$ where $a_i$ are holomorphic functions in $D$. It has associated discriminant $\Delta$, which for $a_1 = a_2 = a_3 = 0$ is given by $4a_4^3 + 27 a_6^2$. We suppose it is not identically zero, in which case it can be shown that $\Delta_f$ is the order of vanishing at 0 of $\Delta$, and that the order of vanishing at 0 is less than 12 (a minimal Weierstrass equation). This family then satisfies the conditions of Theorem \ref{sing-of-metric}. The general fiber is an elliptic curve and $X$ has at worst a single singularity which is a du Val (or A-D-E-singularity), and the invariant differential $$\omega = \frac{dy}{3x^2 + 2a_2 x + a_4 - a_1y} = \frac{dx}{2y + a_1 x + a_3}$$ defines a global trivialization of $\omega_{X/D} = f^* f_* \omega_{X/D}$. The N\'eron model is the minimal resolution of singularities, $\pi: \mc E \to X$, and $\pi^* \omega$ also globally trivializes $\omega_{\mc E/D} = \pi^* \omega_{X/D}$. Thus any smooth function $\mc E \to \bb R$ defines a smooth metric on $\omega_{X/D}$ in the sense of Definition \ref{def:smoothmetric}. For the 0-section $[0:1:0] \times D$ of the Weierstrass model we have $f^* 0^* \omega_{\mc E/D} =  \omega_{\mc E/D}$ and the restriction of the invariant differential allows us to define a hermitian metric on $\omega_{\mc E/D}$ by choosing a smooth function $D \to \bb R$. Picking a constant function defines a fiberwise flat metric on $\omega_{\mc E/D}$, whose analytic torsion is given by a real-analytic Eisenstein series and content of Theorem \ref{sing-of-metric} is basically the famous Kronecker limit formula. \\
 As a concrete example, if $y^2 = x^3 + t^k$ for $1 \leq k \leq 5$, then $\Delta = 27 t^{2k}$, $\Delta_f = 2k$, and $X$ is smooth if and only if $k=1$. In this case the Milnor number is given by $2$ and the monodromy operator is given by $$T = \begin{bmatrix}
0 & 1 \\
-1 & 1
\end{bmatrix}$$ with monodromy eigenvalues given by primitive sixth roots of unity.

\providecommand{\bysame}{\leavevmode\hbox to3em{\hrulefill}\thinspace}
\providecommand{\MR}{\relax\ifhmode\unskip\space\fi MR }
\providecommand{\MRhref}[2]{%
  \href{http://www.ams.org/mathscinet-getitem?mr=#1}{#2}
}
\providecommand{\href}[2]{#2}

\end{document}